%

\documentclass[10pt]{amsart}
\usepackage{amsmath,amsthm,amssymb,curves,url}
\include{xy} 
\xyoption{all}
\SelectTips{cm}{}

\usepackage{oldgerm}

\newcommand\Z{{\mathbb Z}}

\newcommand\Znn{\Z_{\ge0}}

\newcommand\Zp{\Z_p}

\newcommand\Q{{\mathbb Q}}
\newcommand\Qpos{\Q_{>0}}

\newcommand\Qp{\Q_p}
\newcommand\Qpx{\Qp^\times}

\newcommand\R{{\mathbb R}}

\newcommand\C{{\mathbb C}}

\newcommand\SetR{{\mathcal R}}

\newcommand\SetT{{\mathcal T}}
\newcommand\SetU{{\mathcal U}}

\newcommand\lra{\longrightarrow}
\newcommand\supparen[1]{^{(#1)}}
\newcommand\suppn{\supparen n}
\newcommand\suppnminusone{\supparen {n-1}}
\newcommand\suppm{\supparen m}
\newcommand\supptwon{\supparen{2n}}

\newcommand\bs{\backslash}
\newcommand\trans{'} 
\newcommand\inv{^{-1}}
\newcommand\tr{\operatorname{tr}}
\newcommand\trsp{\operatorname{tr}\,} 
\newcommand\smallhalf{{\textstyle\frac12}}




\newcommand\ie{{\em i.e.}}

\newcommand\Mats[2]{{\rm M}_{#1}(#2)}

\newcommand\MatsnZ{\Mats n\Z}

\newcommand\Matssym[2]{{\rm M}_{#1}(#2)^{\rm sym}}
\newcommand\MatsnRsym{\Matssym n\R}
\newcommand\MatsnZsym{\Matssym n\Z}

\newcommand\QxpmW{\Q[x^{\pm1}]^W} 

\newcommand\Vsp{{\mathcal V}}
\newcommand\Vn{\Vsp_n}
\newcommand\VnZ{\Vn(\Z)}

\newcommand\ip[2]{\langle #1,#2\rangle} 

\newcommand\Pos{{\mathcal C}}
\newcommand\Pn{\Pos_{\!n}}
\newcommand\Pncl{\overline\Pos_{\!n}}

\newcommand\siX[1]{{\mathcal X}_{#1}} 
\newcommand\Xn{\siX n}
\newcommand\Xnsemi{\siX n^{\rm semi}} 
\newcommand\Xtwonsemi{\siX{2n}^{\rm semi}} 

\newcommand\Xm{\siX m}
\newcommand\Xsix{\siX 6}

\newcommand\SL[2]{\operatorname{SL}_{#1}(#2)}
\newcommand\SLtwoZ{\SL2\Z}

\newcommand\GL[2]{\operatorname{GL}_{#1}(#2)}
\newcommand\GLnZ{\GL n\Z}
\newcommand\GLnZp{\GL n{\Zp}}
\newcommand\GLmZp{\GL m{\Zp}}
\newcommand\GLnZtwo{\GL n{\Z_2}}

\newcommand\Spgp{\operatorname{Sp}}
\newcommand\Sp[2]{\operatorname{Sp}_{#1}(#2)}

\newcommand\SpnR{\Sp n\R}
\newcommand\SpnZ{\Sp n\Z}

\newcommand\Gamzero{\Gamma_{\!0}}
\newcommand\Gamone{\Gamma_{\!1}}
\newcommand\Gamtwo{\Gamma_{\!2}}
\newcommand\Gamthree{\Gamma_{\!3}}

\newcommand\Gamn{\Gamma_{\!n}}
\newcommand\Gamtwon{\Gamma_{\!2n}}
\newcommand\Gamnmone{\Gamma_{n-1}}

\newcommand\GSpgp{\operatorname{GSp}}
\newcommand\GSppos[2]{{\rm GSp}_{#1}^+(#2)}

\newcommand\GSpnposQ{\GSppos n\Q}
\newcommand\GSpnposZpinv{\GSppos n{\Z[1/p]}}

\newcommand\HA{{\mathcal H}} 

\newcommand\smallmat[4]{\left[\begin{smallmatrix}
{#1}&{#2}\\{#3}&{#4}\end{smallmatrix}\right]}

\newcommand\smallmatabcd{\smallmat abcd}

\newcommand\UHP{{\mathcal H}} 
\newcommand\UHPn{{\UHP_n}} 

\newcommand\wtvar[2]{[#1]_{#2}}
\newcommand\wtk[1]{\wtvar{#1}k}

\newcommand\fc[2]{a(#1;#2)} 
\newcommand\e{{\rm e}} 

\newcommand\Enk{{E_k\suppn}}
\newcommand\Enminusonek{{E_k\suppnminusone}}
\newcommand\Etwonk{{E_k\supptwon}}
\newcommand\Esixk{{E_k\supparen6}}
\newcommand\Eonek{{E_k\supparen1}}
\newcommand\calE{{\mathcal E}}
\newcommand\calEnk{{\calE_k\suppn}}
\newcommand\calEtwonk{{\calE_k\supptwon}}

\newcommand\MFsnoweight{{\mathcal M}} 

\newcommand\MFswtgp[2]{\MFsnoweight_{#1}(#2)}
\newcommand\MFs[1]{\MFswtgp k{#1}} 
\newcommand\CFsnoweight{{\mathcal S}} 

\newcommand\CFswtgp[2]{\CFsnoweight_{#1}(#2)}
\newcommand\CFs[1]{\CFswtgp k{#1}} 

\newcommand\mymod{\text{ mod }}

\newcommand{\Kfield}{{\mathbb K}}
\newcommand{\algints}{{\mathcal O}}
\newcommand\OK{\algints_\Kfield}
\newcommand\primep{{\mathfrak p}}

\newcommand\hlf{\textstyle{\frac12}}

\renewcommand{\Re}[1]{{\rm Re}(#1)}
\renewcommand{\Im}[1]{{\rm Im}(#1)}

\newcommand\iotast{\iota^*}
\newcommand\iotastf{\iota^*\!f}

\newcommand{\ord}[2]{{\rm ord}_{#1}(#2)}

\newcommand\Qst[3]{Q_{#1}^{\rm st}(#2,#3)}
\newcommand\Qpst[2]{\Qst p{#1}{#2}}

\newcommand\Lst[2]{L^{\rm st}(#1,#2)}
\newcommand\Qsp[3]{Q_{#1}^{\rm spin}(#2,#3)}
\newcommand\Qpsp[2]{\Qsp p{#1}{#2}}

\def\blfootnote{\gdef\@thefnmark{}\@footnotetext}

\theoremstyle{plain}
\newtheorem{theorem}{Theorem}[section]

\newtheorem{proposition}[theorem]{Proposition}

\begin{document}
\title[Katsurada's determination of Eisenstein series]
{Using Katsurada's determination of the Eisenstein series to compute
Siegel eigenforms}

\author[O.~D.~King]{Oliver~D.~King
}
\address{Dept.~of Mathematics, University of California, Berkeley, CA 94720 USA
\footnote{Current address: Department of Cell and Developmental Biology, University of Massachusetts Medical School, Worcester MA 01655 USA}
}
\email{Oliver.King@umassmed.edu} 
%
%

\author[C.~Poor]{Cris Poor}
\address{Dept.~of Mathematics, Fordham University, Bronx, NY 10458 USA}
\email{poor@fordham.edu}

\author[J.~Shurman]{Jerry Shurman}
\address{Reed College, Portland, OR 97202 USA}
\email{jerry@reed.edu}

\author[D.~Yuen]{David S.~Yuen}
\address{Dept.~of Mathematics and Computer Science, Lake Forest
College, 555 N.~Sheridan Rd., Lake Forest, IL 60045 USA}
\email{yuen@lakeforest.edu}

\subjclass[2010]{Primary: 11F46; Secondary: 11F30}
\date{\today}

\begin{abstract}
We compute Hecke eigenform bases of spaces of level one, degree~three Siegel
modular forms and $2$-Euler factors of the eigenforms through weight $22$.
Our method uses the Fourier coefficients of Siegel Eisenstein series,
which are fully known and computationally tract\-able 
by the work of H.\ Katsurada; we also use 
P.~Garrett's decomposition of the pullback of the Eisenstein
series through the Witt map.
Our results support I.~Miyawaki's conjectural lift,
and they give examples of eigenforms that are congruence neighbors. 

\end{abstract}

\keywords{Eisenstein series, $F_p$ polynomial, Siegel eigenform}


\maketitle
\section{Introduction\label{secI}}

Eisenstein series are central in the theory of Siegel 
modular forms.  The algorithmic computation of the Fourier coefficients 
of Siegel Eisenstein series began with C.~L.~Siegel and was completed by H.~Katsurada, 
whose work deserves to be widely known. 
For any positive integer degree~$n$ and even integer weight $k>n+1$,
the Siegel Eisenstein series of weight~$k$ and degree~$n$ is
$$
\Enk(z)=\sum_{\gamma\in P_\Z\bs\Gamn}j(\gamma,z)^{-k}.
$$
Here $z$ lies in the Siegel upper half space~$\UHPn$,
and the summand $j(\gamma,z)^{-k}$ is~$1$ for the Siegel parabolic
subgroup $P_\Z=\{\smallmat ab0d\}$ of the integral symplectic group
$\Gamn=\SpnZ$.
(Section~\ref{I:TN} will review the background for this paper.)
This Eisenstein series has the Fourier series representation
$$
\Enk(z)=\sum_{t\in\Xnsemi}\fc t{\Enk}\,\e(\ip tz),
$$
where $\Xnsemi$ denotes the set of semi-integral positive semidefinite
$n$-by-$n$ matrices.
The Siegel $\Phi$ map takes Eisenstein series to Eisenstein series,
$\Phi\Enk=\Enminusonek$ and $\Phi\Eonek=1$,
so it suffices to compute the Fourier coefficients of Eisenstein
series for definite indices~$t$; the set of such matrices is denoted~$\Xn$.
The Fourier coefficient formula for definite indices,
to be elaborated below, is
$$
\fc t\Enk=
\dfrac{2^{\lfloor \frac{n+1}{2} \rfloor} \prod_{p}F_p(t,p^{k-n-1})}
{\zeta(1-k)\prod_{i=1}^{\lfloor n/2\rfloor}\zeta(1-2k+2i)}
\cdot\begin{cases}
L(\chi_{D_t},1-k+n/2)&\text{$n$ even},\\
1&\text{$n$ odd}.
\end{cases}
$$
The Fourier coefficient depends only on the genus of its index~$t$.
In fact the polynomial $F_p(t,X)\in\Z[X]$ depends only on the class
of~$t$ over~$\Zp$. 
Algorithmic specification of these $F_p$ polynomials was the last
impediment to evaluating Siegel Eisenstein series Fourier coefficients,
and it was overcome by recursion relations due to Katsurada
\cite{katsurada99}. 
This article explains how the Fourier coefficients of Siegel
Eisenstein series are computationally accessible, and it makes
computer programs to evaluate them publicly available. 

For example, consider the Fourier coefficient index
\begin{equation*}
t=\frac12\left[\begin{matrix}
2 & 1 & 1 & 0 & 1 & 2 \\
1 & 4 & 2 & 2 & 0 & 1 \\
1 & 2 & 4 & 2 & 0 & 0 \\
0 & 2 & 2 & 4 & 2 & 2 \\
1 & 0 & 0 & 2 & 4 & 2 \\
2 & 1 & 0 & 2 & 2 & 8 
\end{matrix}\right]\in\Xsix.
\end{equation*}
Our genus symbol program takes~$2t$ as an input and returns the genus
symbol $4^{-2}_4\, 3^{-1}$. 
Our $F_p$ polynomial program takes this genus symbol and the
determinant $\det(2t)=48$ as input and returns the $F_p(t,X)$
polynomials for all~$p\mid2\det(2t)$,
$$
\left[F_2(X),F_3(X)\right]=\left[1+24X+256X^2+3072X^3+16384X^4,1\right].
$$
These data make no reference to any particular Eisenstein series
degree or weight. With these $F_p$ polynomials and the weight $k=16$
as input, our Eisenstein series Fourier coefficient program returns
$$
\fc {t}{E^{(6)}_{16}}
=\frac{ 9780154654408147370255260881715200}
{13912726954911229324966739363569}\,. 
$$
We hope that our programs \cite{yuen15} will be useful to researchers. 

\medskip

This article describes our computation of the $2$-Euler factors of
Hecke eigenform bases of the degree~$3$ cusp form spaces $\CFs\Gamthree$
for even~$k$ up through~$22$.
Along with Katsurada's completion of the Siegel Eisenstein series Fourier
coefficient formula, which makes a coefficient readily computable from
a genus symbol of its index, the second idea of our method is
P.~Garrett's decomposition of the pulled back Eisenstein series
\cite{garrett84}. Thus we call our algorithm the pullback--genus method.

Our computation naturally continues the computational work in
\cite{miyawaki92}, where I.~Miyawaki studied the weights $k=12,14$
and conjectured two kinds of lift in consequence of his results.
Miyawaki's first lifting conjecture with an additional natural
nonvanishing condition has been settled by T.~Ikeda \cite{ikeda06},
who gave a construction in general degree that generalizes the
degree~$3$ case; we call these lifts Ikeda--Miyawaki lifts. 
Miyawaki's second lifting conjecture is still open. Our computations
in weights $16$ through~$22$ support the conjecture, exhibiting
eigenforms whose $2$-Euler factors agree with all of Miyawaki's
predicted lifts of the second type. Some of our results were
announced in \cite{ikpy14}, which forward referenced this article.

Besides the Ikeda-Miyawaki lifts and the conjectural Miyawaki lifts, 
we see nonlift eigenforms that have unimodular Satake parameters at~$2$. 
So far, each nonlift eigenform has at least one lift eigenform as a 
{\em congruence neighbor\/}. 
Two eigenforms $f$ and~$g$ are congruence neighbors if all their
eigenvalues are congruent modulo some prime ideal~$\primep$ 
in the ring of integers of the field of eigenvalues, and the
ideal~$\primep$ is  a {\em congruence prime\/}.
Thus, although not all eigenforms are lifts, through weight~$22$ all
eigenforms are lifts or congruence neighbors of lifts. 


We summarize our results here, but the reader is encouraged to examine
the database \cite{yuen15}, which includes Fourier coefficients, Euler
factors, noncusp forms, and more. 
\begin{itemize}
\item $\dim(\CFswtgp{16}\Gamthree)=3$,
with a conjugate pair $M^{\rm I}_{16}(f_{28,\pm},g_{16})$ of
Ikeda--Miyawaki lifts over the quadratic number field of
the $\CFswtgp{28}\SLtwoZ$ Hecke eigenbasis,
and one apparent nonlift over~$\Q$ with unimodular Satake parameters at~$2$.
The apparent nonlift is a congruence neighbor of the Ikeda--Miyawaki
lifts modulo a prime over~$107$.
This result was announced in \cite{ikpy14}.
\item $\dim(\CFswtgp{18}\Gamthree)=4$,
with a conjugate pair $M^{\rm I}_{18}(f_{32,\pm},g_{18})$ of
Ikeda--Miyawaki lifts over the quadratic number field of the
$\CFswtgp{32}\SLtwoZ$ Hecke eigenbasis,
and a conjugate pair $M^{\rm II}_{18}(f_{34,\pm},g_{16})$
of apparent Miyawaki~II lifts over the quadratic number field of the
$\CFswtgp{34}\SLtwoZ$ Hecke eigenbasis.
This space has no congruence neighbors for large primes.
\item $\dim(\CFswtgp{20}\Gamthree)=6$,
with a conjugate triple $M^{\rm I}_{20}(f_{36,(1,2,3)},g_{20})$ of
Ikeda--Miyawaki lifts over the cubic number field of the
$\CFswtgp{36}\SLtwoZ$ Hecke eigenbasis,
a conjugate pair $M^{\rm II}_{20}(f_{38,\pm},g_{18})$ of apparent
Miyawaki~II lifts over the quadratic number field of the
$\CFswtgp{38}\SLtwoZ$ Hecke eigenbasis,
and one apparent nonlift over~$\Q$ with unimodular Satake parameters at~$2$.
The apparent nonlift is a congruence neighbor of the Ikeda--Miyawaki
lifts modulo a prime over~$157$. 
This result was announced in \cite{ikpy14}.
\item $\dim(\CFswtgp{22}\Gamthree)=9$,
with a conjugate triple $M^{\rm I}_{22}(f_{40,(1,2,3)},g_{22})$ of
Ikeda--Miyawaki lifts over the cubic number field of the
$\CFswtgp{40}\SLtwoZ$ Hecke eigenbasis,
a conjugate triple of apparent Miyawaki~II lifts
$M^{\rm II}_{22}(f_{42,(1,2,3)},g_{20})$
over the cubic number field of the $\CFswtgp{42}\SLtwoZ$ Hecke eigenbasis,
and a conjugate triple of apparent nonlifts over a real cubic field with unimodular Satake
parameters at~$2$.
The apparent nonlifts are congruence neighbors of the
Ikeda--Miyawaki lifts modulo a prime over $1753$, and they are
congruence neighbors of the apparent Miyawaki~II lifts modulo two
primes, one over $613$ and the other over $677$.
\end{itemize}
Heim \cite{heim10} has raised the question of whether
$\operatorname{GL}(2)$-twisted $L$-functions would appear as spinor
$L$-functions of degree~three Siegel cusp eigenforms. Our computations
show that these $L$-functions do not arise in $\CFs\Gamthree$
for~$k\le22$, which suggests 
that Miyawaki found all the lifts 
that occur in degree three and level one. 


\medskip

Section~\ref{I:TN} reviews the setting for our computations. 
Section~\ref{secEFC} reviews the Eisenstein series Fourier coefficient formula.
Section~\ref{secPG} describes the pullback--genus method,
which gives a rational basis of $\MFs{\Gamma_n}$.
Section~\ref{secHEE} reviews how to compute Hecke eigenform Euler factors.
Section~\ref{secImp} explains the pullback-genus method's implementation, 
which is essential for making these computations tractable.

We thank S.~B\"ocherer, P.~Garrett, B.~Heim, T.~Ibukiyama, and
H.~Katsurada for their support for this project. 
We thank Fordham University for the use of the Jacobi and ACE servers.


\section{Background\label{I:TN}}

Let $n$ be a positive integer.
Let $\Vn$ denote the vector space $\MatsnRsym$ of symmetric
$n\times n$ real matrices, carrying the inner product $\ip tu=\tr(tu)$.
Let $\Pn$ denote the cone of positive definite elements of $\Vn$. The
Siegel upper half space of degree~$n$ is
$\UHPn=\{z=x+iy:x\in\Vn,\ y\in\Pn\}$.
The symplectic group $\Spgp(n)$ is defined by the $2n\times2n$
matrix condition $J[g]=J$ where $J=\smallmat0{-1}1{\phantom{-}0}$ with
each block $n\times n$, and $J[g]=g\trans Jg$ with $g\trans$ the
transpose of~$g$.
The real symplectic group acts on the Siegel space,
$g\langle z \rangle=(az+b)(cz+d)\inv$ where $g=\smallmatabcd$.
For any $g\in\SpnR$ and any $z\in\UHPn$ the factor of automorphy
$j(g,z)$ is $\det(cz+d)$. For any integer~$k$, the weight~$k$
action $\wtk{\,\cdot\,}$ of~$\SpnR$ on functions $f:\UHPn\lra\C$ is
$f\wtk g(z)=j(g,z)^{-k}f(g\langle z \rangle)$.

The integral symplectic group $\SpnZ$ is denoted~$\Gamn$.
For any integer~$k$, a function $f:\UHPn\lra\C$ is a
Siegel modular form of weight $k$ and degree $n$ if (1) $f$ is
holomorphic, (2) $f\wtk\gamma=f$ for all $\gamma\in\Gamn$,
(3) for any $y_o\in\Pn$, $f$ is bounded on the set $\{z\in\UHPn:
\Im z>y_o\}$.
The vector space of such Siegel modular forms is denoted $\MFs\Gamn$.
For any positive integer~$n$, Siegel's map
$\Phi:\MFs\Gamn\lra\MFs{\Gamnmone}$ is given by
$(\Phi f)(z)=\lim_{\eta\to+\infty}f(\smallmat z00{i\eta})$.
The kernel of Siegel's $\Phi$-map on~$\MFs\Gamn$
is the vector space of Siegel cusp forms of degree~$n$ and weight~$k$,
denoted $\CFs\Gamn$.
By convention $\MFs{\Gamzero}=\CFs{\Gamzero}=\C$.

The Euclidean space $\Vn$ contains the integer lattice
$\VnZ=\Vn\cap\MatsnZ=\MatsnZsym$, whose dual lattice $\VnZ^*$ is
all $v\in\Vn$ such that $v_{ii}\in\Z$ and $v_{ij}\in\smallhalf\Z$ for
all~$i,j$.
Let $\Xn=\VnZ^*\cap\Pn$ and $\Xnsemi=\VnZ^*\cap\Pncl$ denote the
positive definite elements and the positive semidefinite elements of
the dual lattice.
Any Siegel modular form has a Fourier series representation, 
setting $e(x) = e^{2\pi i x}$, 
$$
f(z)=\sum_{t\in\Xnsemi}\fc tf\,\e(\ip tz),
$$
and $f$ is a cusp form if and only its Fourier series is supported
on~$t\in\Xn$.

As already discussed, the Siegel parabolic subgroup of
$\Gamn$-elements $\smallmat ab0d$ is denoted~$P_\Z$, and the Siegel
Eisenstein series of degree~$n\ge1$ and even weight~$k>n+1$ is
$\Enk(z)=\sum_{\gamma\in P_\Z\bs\Gamn}j(\gamma,z)^{-k}$ for $z\in\UHPn$,
and the Fourier coefficients $\fc t{\Enk}$ of the Eisenstein series
have been fairly well understood since C.~L.~Siegel's work of~1939
\cite{siegel39} but their practical description for arbitrary~$n$ was
completed by Katsurada only in 1999 \cite{katsurada99}.
In general the Fourier coefficients of a Siegel modular form~$f$ of even
weight are $\GLnZ$-equivalence class functions,
meaning that $\fc\cdot f$ is constant over each class $t[\GLnZ]$.
But a key computational point here is that, given any decomposition
$t\sim u\oplus0_{n-m}$ where $u\in\Xm$ is strictly positive, the
Eisenstein series Fourier coefficient $\fc t\Enk$ is determined by
only the genus of~$u$, \ie, by the set of matrices in~$\Xm$ that lie in
$u[\GLmZp]$ for every prime~$p$. The genus is a coarser equivalence
class than $t[\GLnZ]$, and a symbol for it is much faster to compute
than the $\GLnZ$-class.

The symplectic similitude group $\GSpgp(n)$ is defined by the
condition $J[g]=m(g)J$ for some invertible multiplier~$m(g)$.
The rational symplectic positive similitude group $\GSpnposQ$,
carrying the condition $m(g)\in\Qpos$, acts on~$\UHPn$ via 
$f\wtk g(z)=m(g)^e j(g,z)^{-k}f(g\langle z \rangle)$,
where the classical choice of the multiplier power is $e=kn-\langle n\rangle$
with $\langle n\rangle=n(n+1)/2$.
Any double coset in $\Gamn\bs\GSpnposQ/\Gamn$ decomposes as finitely
many right cosets, $\Gamn g\Gamn=\bigsqcup_{i=1}^d\Gamn g_i$, and it
acts correspondingly on~$\MFs\Gamn$ by 
$f\wtk{\Gamn g\Gamn}(z)=\sum_{i=1}^df\wtk{g_i}(z)$.
The double cosets generate a commutative algebra over~$\Q$, the Hecke
algebra $\HA(\Gamn,\GSpnposQ)$.
The space $\MFs\Gamn$ has a basis of Hecke eigenforms.
Using standard generators of the Hecke algebra, 
any $f\in\MFs\Gamn$ is a Hecke eigenform if for each prime it is an
eigenform of $T(p)=\Gamn{\rm diag}(1_n,p1_n)\Gamn$
and of $T_i(p^2)=\Gamn{\rm diag}(1_{n-i},p1_i,p^21_{n-i},p1_i)\Gamn$
for $i=1,\cdots,n-1$.

Let $z_1\oplus z_2=\smallmat{z_1}00{z_2}$ for $z_1,z_2\in\UHPn$.
The symplectic embedding $\iota(z_1\times z_2)= z_1\oplus z_2$ pulls back
to a map of functions, $(\iotastf)(z_1\times z_2)=f(z_1\oplus z_2)$.
The pullback~$\iotast$ takes $\MFs\Gamtwon$ to $\MFs\Gamn\otimes\MFs\Gamn$
and takes $\CFs{\Gamtwon}$ to $\CFs{\Gamn}\otimes\CFs{\Gamn}$ 
by results of E.~Witt \cite{witt41}. 
These maps and variants of them are often called the Witt map.
Garrett's formula \cite{garrett84}
(originally in a 1979 preprint by Garrett, then in S.~B\"ocherer's Ph.D.
directed by H.~Klingen and in a paper of M.~Harris)
decomposes the pulled back Eisenstein series $\iotast\Etwonk$
as a sum of nonzero multiples of the diagonal tensor products
over a Hecke eigenform basis $\{f_1,\cdots,f_d\}$ of~$\MFs\Gamn$,
$$
\iotast\Etwonk=\sum_{\ell=1}^dc_\ell\,f_\ell\otimes f_\ell,
\quad\text{all $c_\ell$ nonzero}.
$$
That is, $\Etwonk(z_1\oplus z_2)=\sum_\ell c_\ell
f_\ell(z_1)f_\ell(z_2)$ for $z_1,z_2\in\UHPn$.
This connection between the Hecke eigenform basis and the Eisenstein
series is what guarantees that the
pullback--genus method works. 
However, when the dimension of $\MFs{\Gamma_n}$ is already known, 
the computations can be rigorously executed using only the existence
of the Witt map. 

Let $\Q[x^{\pm1}]$ denote the algebra of rational Laurent polynomials
in indeterminates $x_0,x_1,\cdots,x_n$.
The Weyl group~$W$ of this algebra is generated by the permutations of
$\{x_1,\cdots,x_n\}$ and the involutions $\tau_i$ for~$i=1,\cdots,n$
taking $x_0$ to~$x_0x_i$ and $x_i$ to~$x_i\inv$.
Fix a prime~$p$, and let $\HA_p=\HA(\Gamn,\GSpnposZpinv)$.
The Satake isomorphism $\Omega=\Omega_p$ from $\HA_p$ to the
subalgebra $\QxpmW$ of Laurent polynomials invariant under the Weyl
group is defined on any double coset $\Gamma g\Gamma$ by taking each
of its constituent cosets $\Gamma b$, with Borel subgroup representative
$b=\smallmat{p^{e_0}d^*}*0d$ whose $d$-block has diagonal
$(p^{e_1},\cdots,p^{e_n})$, to $\Omega(\Gamma b)
=x_0^{e_0}(x_1/p)^{e_1}(x_2/p^2)^{e_2}\cdots(x_n/p^n)^{e_n}$.
For any Siegel Hecke eigenform~$f$, the eigenvalue homomorphism
$\lambda_f:\HA_p\lra\C$ is defined by the condition $Tf=\lambda_f(T)f$
for $T\in\HA_p$. There exists a Satake parameter
$\alpha=\alpha_{f,p}\in\C^{n+1}$ such that
$\lambda_f(T)=(\Omega(T))(\alpha_{f,p})$ for all~$T\in\HA_p$.
The standard $L$-function of a Siegel Hecke eigenform~$f$ is the
product of Euler factors defined in terms of the Satake parameters.
Specifically, for each prime~$p$ let the $p$th Satake parameter be
$\alpha_{p,f}=(\alpha_{0,p},\alpha_{1,p},\cdots,\alpha_{n,p})$;
then $\Lst fs=\prod_p\Qpst f{p^{-s}}\inv$
where $\Qpst fX=(1-X)\prod_{i=1}^n(1-\alpha_{i,p}X)(1-\alpha_{i,p}^{-1}X)$.

Miyawaki \cite{miyawaki92} computed for the generator~$F_{12}$
of~$\CFswtgp{12}\Gamthree$ that $\Lst{F_{12}}s$ has the same $2$-Euler
factor as $L(f_{20},s+10)L(f_{20},s+9)\Lst{g_{12}}s$
where $f_{20}$ and $g_{12}$ respectively generate
$\CFswtgp{20}\SLtwoZ$ and $\CFswtgp{12}\SLtwoZ$,
and he computed for the generator~$F_{14}$ of~$\CFswtgp{14}\Gamthree$
that $\Lst{F_{14}}s$ has the same $2$-Euler factor as
$L(f_{26},s+13)L(f_{26},s+12)\Lst{g_{12}}s$.
He conjectured that for any even weight~$k$, and for each pair of elliptic
Hecke eigenforms $f\in\CFswtgp{2k-4}\SLtwoZ$ and $g\in\CFswtgp{k}\SLtwoZ$, 
there exists a Siegel Hecke eigenform $F\in\CFs\Gamthree$
whose standard $L$-function factors as
$\Lst Fs= L(f,s+k-2)L(f,s+k-3)\Lst gs$, and for each pair of elliptic
Hecke eigenforms $f\in\CFswtgp{2k-2}\SLtwoZ$ and $g\in\CFswtgp{k-2}\SLtwoZ$
there exists a Siegel Hecke eigenform $F\in\CFs\Gamthree$
whose standard $L$-function factors as
$\Lst Fs= L(f,s+k-1)L(f,s+k-2)\Lst gs$.
Granting a nonvanishing condition, Ikeda \cite{ikeda06} established a
general lift subsuming Miyawaki's first conjectured lift. The Hecke
eigenfunction behavior of this lift was shown by Ikeda, Heim
\cite{heim12}, and Hayashida \cite{hayashida14}.

Let $c_k\supptwon=2^{-n}\zeta(1-k)\prod_{i=1}^n\zeta(1-2k+2i)$.
The dilated Eisenstein series $\calEtwonk=c_k\supptwon\Etwonk$
has rational Fourier coefficients that are integral at all primes $p>2k-1$.
Garrett's formula with $\calEtwonk$ in place of~$\Etwonk$
can quickly show congruences between Hecke eigenform basis elements.
Specifically, the Fourier coefficients of a Hecke eigenform can be taken to
lie in the integer ring~$\OK$ of a number field~$\Kfield$,
and the constants~$c_\ell$ in the formula to lie in~$\Kfield^\times$.
Call a maximal ideal~$\primep$ of~$\OK$ {\em big\/} if it lies over a
rational prime $p>2k-1$.
Suppose that there exists a big prime~$\primep$ of~$\OK$ such that
$\ord\primep{c_1}=\ord\primep{c_2}=-1$
and $\ord\primep{c_\ell}\ge0$ for $\ell=3,\cdots,d$.
Then for any index~$s$ such that $\ord\primep{\fc s{f_1}}=0$, we deduce 
$f_1/\fc s{f_1}=f_2/\fc s{f_2}\mymod\primep$,
and especially, if $\fc s{f_1}=\fc s{f_2}=1$ then $f_1=f_2\mymod\primep$. 
Thus $f_1$ and $f_2$ are congruence neighbors and $\primep$ is a congruence prime.



\section{Eisenstein Series Fourier Coefficients\label{secEFC}}

\subsection{$F_p$-Polynomials\label{EFC:FPP}}

Polynomials $F_p(u,X)\in\Z[X]$ for $p$ prime and $u\in\Xm$ appear in
the Siegel Eisenstein series Fourier coefficient formula. 
The first author of this paper wrote a program to compute these
polynomials \cite{king03}, which has since been modified to accept
higher degree input. We refer to \cite{katsurada99} for the
definition of the $F_p$ polynomials; there Katsurada proved a
functional equation for these polynomials, which was an important
step in his establishment of their recurrence relations. 
We review this functional equation because it serves as a check on
computations. The functional equation makes reference to the
Hilbert symbol and to the Hasse invariant. To review, for $a,b\in\Qpx$
the Hilbert symbol $(a,b)_p$ is $1$ if $aX^2+bY^2=Z^2$ has nontrivial
solutions in~$\Qp^3$ and $-1$ if not. For $u\in\GL m\Qp^{\rm sym}$
the Hasse invariant of~$u$ is $h_p(u)=\prod_{i\le j}(a_i,a_j)_p$ where
$u$ is $\GL m\Qp$-equivalent to the diagonal matrix having entries
$a_1,\cdots,a_m$.
If $m$ is even then $(-1)^{m/2}\det(2u)$ takes the form $D_uf_u^2$
where $D_u$ is~$1$ or the fundamental discriminant of a quadratic
number field and $f_u$ is a positive integer; let $\chi_{D_u}$ denote the
quadratic Dirichlet character of conductor~$|D_u|$.
For rank $m=0$, the empty matrix has determinant~$1$ by convention and
so $D_u=f_u=1$.

\begin{theorem}[Katsurada's Functional Equation\label{KatFE}]
Let $u\in\Xm$. Set
$$
e_p(u)=\begin{cases}
2(\lfloor\frac{\ord p{\det(2u)}-1-\delta_{p,2}}{2}\rfloor+\chi_{D_u}(p)^2)
&\text{if $m$ is even},\\
\ord p{\det(2u)/2}&\text{if $m$ is odd}.
\end{cases}
$$
Here $\delta_{p,2}$ is the Kronecker delta. Then
$$
F_p(u,p^{-m-1} X\inv)= \pm(p^{(m+1)/2}X)^{-e_p(u)}F_p(u,X),
$$
where if $m$ is even then the ``$\pm$'' sign is positive,
and if $m$ is odd then it is
$$
\big(\det(u),(-1)^{(m-1)/2}\det(u)\big)_p\,(-1,-1)_p^{(m^2-1)/8}\,h_p(u)
$$
with $(\cdot,\cdot)_p$ the Hilbert symbol and $h_p$ the Hasse invariant 
as described above.
\end{theorem}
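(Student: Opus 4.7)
The plan is to establish the functional equation by reducing $u$ to a $\Zp$-Jordan splitting and then verifying the identity block by block. Since $F_p(u,X)$ depends only on the $\GLmZp$-class of $u$, I may replace $u$ by $\bigoplus_j p^{a_j}u_j$ with each $u_j$ unimodular over $\Zp$. The exponent $e_p(u)$, the discriminant data $D_u$ and $\chi_{D_u}$, and the Hasse invariant $h_p(u)$ all behave predictably under orthogonal direct sums (with the $\delta_{p,2}$ correction absorbing dyadic pathology), so the statement of the theorem is compatible with Jordan decomposition.

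The key input is the generating-function definition of $F_p(u,X)$ as, up to standard Euler factors, the numerator of the local Siegel series $\sum_r \alpha_p(u,r)\,T^r$, where $\alpha_p(u,r)$ is the local representation density of $u$ by a unimodular form of rank $m+2r$. On the series side, the substitution $X\mapsto p^{-m-1}X^{-1}$ translates to the involution $s\mapsto (m+1)/2 - s$. I would invoke the local functional equation for these representation densities, of Tate type, arising from $p$-adic Poisson summation on $\Mats m{\Qp}^{\rm sym}$ against an appropriate Schwartz--Bruhat measure. Translating this symmetry through the Euler-factor dictionary produces the desired identity for $F_p(u,X)$ up to an overall sign and an overall power of $p^{(m+1)/2}X$; the latter is identified with $e_p(u)$ by comparing the degree of $F_p(u,X)$ against the pole/zero structure of the Siegel series, which in turn is controlled by $\ord p{\det(2u)}$ and, in the even-$m$ case, by the quadratic character $\chi_{D_u}(p)$.

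It then remains to identify the sign. For a single unimodular block, the sign in the local functional equation is a standard local epsilon factor that can be computed explicitly, with subcases governed by the parity of the block rank, by whether $p=2$, and by the isomorphism class of the unimodular form (hyperbolic versus anisotropic, split versus nonsplit). Multiplicativity across blocks, combined with the definition $h_p(u)=\prod_{i\le j}(a_i,a_j)_p$ and the bilinearity of the Hilbert symbol, should assemble these block signs into the global sign asserted in the theorem. The even-$m$ case is cleaner: the local sign is absorbed by $\chi_{D_u}$, leaving the explicit ``$+$'' in the statement.

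The main obstacle I anticipate is this final sign calculation in the odd-$m$ case. Matching the triple product $(\det(u),(-1)^{(m-1)/2}\det(u))_p\,(-1,-1)_p^{(m^2-1)/8}\,h_p(u)$ against the product of block-wise epsilon factors is delicate: the exponent $(m^2-1)/8$ changes nontrivially under orthogonal direct sums, and one must repeatedly invoke the Hilbert product formula, bilinearity, and the identity $h_p(u_1\oplus u_2) = h_p(u_1)h_p(u_2)(\det u_1,\det u_2)_p$ to reconcile the two expressions. This combinatorial bookkeeping of Hilbert symbols, rather than any analytic step, is where I expect the real work to lie.
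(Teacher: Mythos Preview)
The paper does not prove this theorem. It is stated as a known result due to Katsurada, with the explicit remark ``We refer to \cite{katsurada99} for the definition of the $F_p$ polynomials; there Katsurada proved a functional equation for these polynomials\ldots\ We review this functional equation because it serves as a check on computations.'' No argument is given in the paper itself, so there is no proof to compare your proposal against.

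As a separate comment on the proposal: what you have written is a strategy outline, not a proof. The conceptual framing (local Siegel series, an involution $s\mapsto(m+1)/2-s$, identification of the sign with a local epsilon factor assembled from Hilbert symbols and the Hasse invariant) is in the right spirit, but none of the analytic or combinatorial steps is actually carried out, and you yourself flag the odd-$m$ sign bookkeeping as the place ``where the real work lies.'' Katsurada's own argument in \cite{katsurada99} is not the Poisson-summation route you sketch; it proceeds through his explicit recursion relations for the $F_p$ polynomials and an induction on the rank, with the functional equation emerging from the structure of those recursions rather than from a Tate-type local functional equation applied directly. If you want to produce an independent proof along your lines, the substantive content would be (i) a precise statement and proof of the local functional equation for the Siegel series numerator, and (ii) the full Hilbert-symbol calculation in the odd-$m$ case; neither is present here.
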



\subsection{Fourier Coefficient Formula\label{EFC:FCF}}

Let $n$ be a positive integer.
For any $t\in\Xnsemi$ we have $t\sim u\oplus0_{n-m}$ under
$\GLnZ$-equivalence, where $m={\rm rank}(t)\in\Znn$ and $u\in\Xm$.
The following result may be found in \cite{katsurada99, katsurada10}. 

\begin{theorem}[Siegel Eisenstein Fourier Coefficient Formula\label{EisFCthm}]
Let $n$ be a positive integer and $k>n+1$ an even integer.
Let $t\in\Xnsemi$, and let $u$, $D_u$, $f_u$, and $\chi_{D_u}$
be as above. Let $c\suppm_k=2^{-\lfloor(m+1)/2\rfloor}
\zeta(1-k)\prod_{i=1}^{\lfloor m/2\rfloor}\zeta(1-2k+2i)$.
Then
$$
\fc t\Enk={1}/{c_k\suppm}\cdot\begin{cases}
L(\chi_{D_u},1-k+m/2)\prod_{p\mid f_u}F_p(u,p^{k-m-1}),
&\text{$m$ even},\\
\prod_{p:\ord p{(1/2)\det(2u)}>0}F_p(u,p^{k-m-1})
&\text{$m$ odd}.
\end{cases}
$$
\end{theorem}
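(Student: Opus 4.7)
The plan is to split the argument into a reduction step and a local--global factorization. First I would reduce to the case where $t$ is positive definite of full rank $m=n$, by iterating the Siegel $\Phi$-map. For even weight~$k$ the coefficient $\fc t\Enk$ depends only on the $\GLnZ$-class of~$t$, and a direct comparison of Fourier expansions shows $\fc{u\oplus0_{n-m}}f=\fc u{\Phi^{n-m}f}$ for every $f\in\MFs\Gamn$. Combined with $\Phi\Enk=\Enminusonek$ (and $\Phi\Eonek=1$), this reduces the problem to computing $\fc u{E_k^{(m)}}$ for $u\in\Xm$.

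Next, for positive definite~$u$ I would unfold the Eisenstein series in the classical manner due to Siegel: parametrize $P_\Z\bs\Gamma_m$ by $\GL m\Z$-orbits of coprime symmetric pairs $(c,d)$, integrate $E_k^{(m)}(x+iy)\,\e(-\ip ux)$ over a fundamental domain for $\VmZ\bs\Vm$, and take a Mellin transform in~$y$. The outcome is an expression of Siegel--Hecke type
$$
\fc u{E_k^{(m)}}=\kappa_{m,k}\det(u)^{k-(m+1)/2}\sum_{c}\det(c)^{-k}\,S(u;c),
$$
where $\kappa_{m,k}$ collects the gamma factors, the sum runs over $\GL m\Z$-orbits of nonsingular symmetric integer matrices~$c$, and $S(u;c)$ is a generalized Gauss sum that factors over primes by the Chinese Remainder Theorem. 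Each prime-local factor is a local density for~$u$ over~$\Zp$, which by the definition in \cite{katsurada99} equals $F_p(u,p^{k-m-1})$ up to a local zeta normalization. For odd~$m$ the good-prime contributions are trivial, leaving only the product over $p\mid\det(2u)/2$; for even~$m$ the good-prime factors assemble into the Dirichlet $L$-value $L(\chi_{D_u},1-k+m/2)$, using that $F_p(u,X)$ specializes to $1-\chi_{D_u}(p)X$ at primes of good reduction. Collecting the remaining global zeta normalizations into $c_k\suppm$ yields the stated formula.

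The main obstacle is the explicit evaluation of the local densities at primes dividing $2\det(2u)$, particularly at $p=2$ where the higher ramification makes direct computation intractable. This is precisely Katsurada's contribution in \cite{katsurada99}: his recursion relations, of which Theorem~\ref{KatFE} is the functional-equation shadow, render the $F_p$ polynomials computable and thereby complete the classical work of Siegel, Shimura, Feit, and Kitaoka. Granting this, the remainder of the argument is bookkeeping of global zeta and gamma constants.
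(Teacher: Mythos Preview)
The paper does not prove this theorem; it simply states that ``the following result may be found in \cite{katsurada99, katsurada10}'' and moves on. So there is no in-paper proof to compare your proposal against.

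That said, your sketch is a faithful outline of the argument in the cited literature: the reduction to definite indices via $\Phi\Enk=\Enminusonek$ is exactly what the paper invokes in the introduction, and the remaining steps---Siegel's unfolding into a product of local densities, the identification of those densities with $F_p(u,p^{k-m-1})$, the assembly of good-prime factors into $L(\chi_{D_u},1-k+m/2)$ when $m$ is even, and the appeal to Katsurada for the bad primes---match the development in Siegel's original work as completed by Katsurada. Your proposal is appropriate as a sketch pointing to the references, which is all the paper itself does.
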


The Riemann zeta values and the quadratic $L$ value in the formula
have the form $\zeta(1-j)=-B_j/j$ and $L(\chi,1-j)=-B_j(\chi)/j$
with the $B_j$ basic or quadratic Bernoulli numbers,
and so they are known rational numbers: 
if $f$ is the conductor of~$\chi$, then 
$\sum_{a=1}^f \chi(a) \frac{te^{at}}{e^{ft}-1} = \sum_{j=0}^{\infty}
B_j(\chi) \frac{t^j}{j!}$ (\cite{aik14}, page~{53}).
The genus symbol of any $u\in\Xm$ is easy to compute (section~\ref{PG:GS}),
and then our program gives $F_p(u,p^{k-m-1})$.
Thus Siegel Eisenstein series Fourier coefficients are tractable.

The Clausen--von Staudt theorem for basic and quadratic Bernoulli
numbers shows that the dilated Eisenstein series $\calEnk=c\suppn_k\Enk$
has rational Fourier coefficients that are integral at all primes $p>2k-1$.
While the monic Eisenstein series has the computational advantage that
its Fourier coefficients depend only on the nonsingular part of their
indices, with no reference to the degree~$n$, the integrality of the
dilated Eisenstein lets us identify congruence neighbors.

For lower weights $\lceil(n+1)/2\rceil\le k\le n+1$, excluding a few cases,
a Siegel Eisenstein series $\Enk(z,s)$ with a complex parameter~$s$
can be continued leftward from its half plane of absolute convergence
$\Re{k+2s}>n+1$ to~$s=0$, where it is a Siegel modular form in~$z$
(see, for example, the introduction to \cite{weissauer86}).
Because our work here uses weights $k>2n+1$ we do not discuss these issues.

\section{The Pullback--Genus Method\label{secPG}}

Recall Garrett's formula,
$\iotast E_k\supparen{2n}=\sum_{\ell=1}^dc_\ell\,f_\ell\otimes f_\ell$
for even~$k >2 n+1$, where $\{f_1,\cdots,f_d\}$ is a Hecke
eigenform basis of~$\MFs\Gamn$. 
Garrett's conjecture that the $c_{\ell}$ are nonzero was proven by S.~B{\"o}cherer \cite{bocherer83}, 
and this result is important in the proof of our Proposition~\ref{PGdimprop}.  
For any two indices $t_1,t_2\in\Xnsemi$, equate the $t_1\times t_2$
Fourier coefficients on the two sides of Garrett's formula to get a
relation among Fourier coefficients and the~$c_\ell$,
\begin{equation}
\label{sorry}
\sum_{r\in\SetR(t_1\times t_2)}
\fc{\smallmat{t_1}r{r\trans}{t_2}}{E_k\supparen{2n}}
=\sum_{\ell=1}^dc_\ell\,\fc{t_1}{f_\ell}\fc{t_2}{f_\ell},
\end{equation}
summing the left side over
$\SetR(t_1\times t_2)=\{r\in\Mats n{\smallhalf\Z}:
\smallmat{t_1}r{r\trans}{t_2}\in\Xtwonsemi\}$.
This set is finite because $\smallmat{t_{1ii}}{r_{ij}}{r_{ij}}{t_{2jj}}$
is positive semidefinite for all~$i,j$, bounding $r_{ij}^2$
by~$t_{1ii}t_{2jj}$.
The summand on the left side is tractable, and additionally 
the set of summation can be traversed quickly enough to make the left
side computationally accessible (section~\ref{PG:IE}).
Conceptually, for each fixed $t_1 \in \Xnsemi$, 
equation~{(\ref{sorry})} gives the Fourier coefficient at $t_2$ of 
an element in $\MFs {\Gamma_n}$ with rational Fourier coefficients. 
We use enough different~$t_1$ to obtain a rational basis. 
If we additionally want eigenforms and Euler factors, 
we apply Hecke operators to this rational basis to obtain a basis of eigenforms~$f_\ell$, 
with some convenient choice of normalization. 
%
%
Sufficiently many Fourier coefficients of a eigenform basis
of~$\CFs\Gamn$ enable us to compute Euler factors of their 
$L$-functions (section~\ref{secHEE}).
If we want to identify congruence neighbors among the $f_{\ell}$,
we then use equation~{(\ref{sorry})} to solve for the coefficients $c_{\ell}$. 

\subsection{Index Enumeration\label{PG:IE}}

Given $t_1,t_2\in\Xnsemi$, the following algorithm traverses
$\SetR(t_1\times t_2)$ quickly enough for our programs to terminate.
Immediately multiply by~$2$ to work with integers; that is, 
double the~$t_i$ for this algorithm and introduce
the matrix $s=\smallmat{t_1}r{r\trans}{t_2}$, find all integral~$r$
that make~$s$ positive semidefinite, and divide each such~$r$ by~$2$
before returning it.
The algorithm builds matrices~$s$ by filling~$r$ columnwise.
Thus, when determining possible provisional values for some~$r_{i,j}$,
a set of provisional values is already present for all other
$r$-entries having row index at most~$i$ and column index at most~$j$.

As noted, $|r_{i,j}|\le m$ where $m=\sqrt{t_{1,i,i}t_{2,j,j}}$.
A first version of the algorithm is therefore $n^2$ nested loops:
For each~$|r_{i_1,j_1}|\le m_1$,
for each~$|r_{i_2,j_2}|\le m_2$,
\dots,
for each~$|r_{i_{n^2},j_{n^2}}|\le m_{n^2}$,
test $s$ for positive semidefiniteness.
We refer to these $m_i$ as the default loop-bounds. 
The algorithm admits two refinements that cut down the nested looping
at the cost of further bounds-checking.
The refinements are most easily explained by example. Let $\tilde s$
denote a submatrix of~$s$ as in the following diagram, in which $n=3$.
$$
s=\left[\begin{array}{ccc|ccc}
\circ&\circ&\cdot&*&*&\cdot\\
\circ&\circ&\cdot&*&r_{i,j}&\cdot\\
\cdot&\cdot&\cdot&\cdot&\cdot&\cdot\\
\hline
*&*&\cdot&\circ&\circ&\cdot\\
*&r_{i,j}&\cdot&\circ&\circ&\cdot\\
\cdot&\cdot&\cdot&\cdot&\cdot&\cdot
\end{array}\right],
\qquad
\tilde s=\left[\begin{array}{cc|cc}
\circ&\circ&*&*\\
\circ&\circ&*&r_{i,j}\\
\hline
*&*&\circ&\circ\\
*&r_{i,j}&\circ&\circ
\end{array}\right].
$$
The circles are entries of $t_1$ and~$t_2$ and the asterisks are some
of the outer loop variables.
First, the loop-bounds of $r_{i,j}$ can be improved. Introduce
three auxiliary matrices~$a$, $b$, and~$c$ by setting $a$ to $\tilde s$ but with the row
and column of the two $r_{i,j}$-entries deleted,
$b$ to $\tilde s$ but with $0$ in place of the higher $r_{i,j}$
and with the row and column of the lower $r_{i,j}$ deleted,
and $c$ to $\tilde s$ but with $0$ in place of both $r_{i,j}$ entries.
Then $\det\tilde s= -\det a\cdot r_{i,j}^2
+(-1)^j2\det b\cdot r_{i,j}+\det c$.
The algorithm needs to have checked the positive semidefiniteness of
previous matrices to ensure that $\det a$ is nonnegative.
In our implementation of columnwise traversal, the check needs to
happen at the bottom of each column.
When $\det a$ is positive, the condition $\det\tilde s\ge0$ is
quadratic and yields bounds of~$r_{i,j}$, generally tighter
than the default loop-bounds.
As a second refinement, we may check whether a value of $r_{i,j}$ makes~$\tilde s$
positive semidefinite before proceeding to more inner loops.
In practice there are tradeoffs between na\"ive looping and checking
to tighten loop bounds or abort inner loops.
Our current implementation is to improve the loop bounds
and to check~$\tilde s \ge 0$ at the bottom of each column of~$r$.

\subsection{Genus Symbol\label{PG:GS}}

Consider any $t\in\Xn$.
A symbol for the genus of~$t$ is described in chapter~15, section~7
of~\cite{conway99}. We summarize it briefly.
As in the index-set traversal algorithm, immediately double~$t$
to ensure integral entries.
The finitely many equivalence classes $\{t[\GLnZp]: p\mid2\det t\}$
determine every other class $t[\GLnZp]$ where $p\nmid2\det t$.
Thus the genus symbol of~$t$ need only describe its local integral
equivalence class for each $p\mid2\det t$.

For an odd prime divisor~$p$ of~$\det t$,
$t$ is $\GLnZp$-equivalent to some
$\bigoplus_{i=1}^kp^{e_i}\delta_i$ where $0\le e_1<\cdots<e_k$
and each $\delta_i$ is a diagonal matrix having $p$-adic units on the
diagonal. Each $p^{e_i}\delta_i$ is a {\em constituent\/} of~$t$,
and $p^{e_i}$ is the {\em scale\/} of the constituent.
The $\GLnZp$-equivalence symbol of~$t$ is
$q_1^{\epsilon_1 n_1}\,q_2^{\epsilon_2 n_2}\,\cdots\,q_k^{\epsilon_k n_k}$
where for $i=1,\cdots,k$,
$q_i=p^{e_i}$ and $\epsilon_i$ is the Legendre symbol~$(\det\delta_i/p)$
and $n_i$ is the size of~$\delta_i$.
The $\GLnZp$-equivalence symbol of~$t$ is uniquely defined by~$t$,
and it completely characterizes $t[\GLnZp]$.

Also, $t$ is $\GLnZtwo$-equivalent to some
$\bigoplus_{i=1}^k2^{e_i}d_i$ where $0\le e_1<\cdots<e_k$
and each $d_i$ is either a diagonal matrix with units on the diagonal
or a direct sum $d_i=\bigoplus_{j=1}^{\ell_i}\delta_{ij}$ where
each~$\delta$ is a $2\times2$ matrix $\smallmat{2^\alpha a}bb{2^\gamma c}$.
The $\GLnZtwo$-equivalence symbol of~$t$ is
$(q_1)_{t_1}^{\epsilon_1 n_1}\,(q_2)_{t_2}^{\epsilon_2 n_2}\,\cdots\,
(q_k)_{t_k}^{\epsilon_k n_r}$,
where for $i=1,\cdots,k$,
$q_i=2^{e_i}$ and $\epsilon_i$ is the Kronecker symbol~$(\det d_i/2)$
($1$ if $\det d_i=\pm1\mymod8$, $-1$ if $\det d_i=\pm3\mymod8$)
and $n_i$ is the size of~$d_i$
and $t_i$ is~$\trsp d_i\,{\rm(mod}\ 8{\rm)}$ if $d_i$ is diagonal,
while it is undefined or~$\infty$ if $d_i$ is a sum of $2\times2$ subblocks.
The $\GLnZtwo$-equivalence symbol of~$t$ determines $t[\GLnZtwo]$,
but not conversely.
A unique symbol can be produced as explained in \cite{conway99}, but
there are computational tradeoffs between actually computing a canonical $2$-adic symbol 
and using a method to compute 
$F_p$ polynomials that accepts 
different symbols for the same genus. 
Since our program accepts any $2$-adic symbol,
we do not discuss how to make the $2$-part of the genus symbol unique.

\subsection{Determining Bases\label{PG:DB}}

The next proposition shows how to find a rational basis for {\it any\/} 
level one space of Siegel modular forms, $\MFs{\Gamma_n}$, for even $k > 2n+1$. 
It is an application of Garrett's formula and highlights the importance of the work 
of Katsurada, which makes the computation of the Fourier coefficients of 
Eisenstein series practical.

\begin{proposition}[Pullback--Genus Method\label{PGdimprop}]
Consider even $k >2 n+1$. 
Let $\SetT=\{t_1,\cdots t_m\}$ be a determining set of Fourier
coefficient indices for~$\MFs\Gamn$.
Define an $m$-by-$m$ matrix 
$M=[\fc{t_i\times t_j}{\iotast E_k\supparen{2n}}]_{m\times m}$.
Then $\dim(\MFs\Gamn)={\rm rank}(M)$.

Column reduce~$M$ to get a matrix
$\left[\begin{array}{c|c}*&0\end{array}\right]$.
The nonzero columns describe the $\SetT$-truncations of a basis
of~$\MFs\Gamn$, \ie, each nonzero column contains the $\SetT$th Fourier
coefficients of a basis element.
Further stipulating that $\SetT$ is ordered with the singular indices
at the beginning, let the column reduction of~$M$ take the form
$$
M\sim\left[\begin{array}{c|c|c}
*&0&0\\
\hline
*&*&0
\end{array}\right]
\quad\text{{\rm(}column space equivalence\/{\rm)}},
$$
with the horizontal dividing line after the rows indexed by singular~$t$.
The columns between the vertical dividing lines describe the
$\SetT$-truncations of a basis of~$\CFs\Gamn$. In particular, 
$\dim(\CFs\Gamn)$ is the number of columns between the
vertical dividing lines.
\end{proposition}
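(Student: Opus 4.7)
The plan is to factor $M$ via Garrett's formula and then reduce each claim to linear algebra on the factors. Reading off the $(t_i,t_j)$-Fourier coefficient on both sides of $\iotast\Etwonk=\sum_{\ell=1}^d c_\ell\,f_\ell\otimes f_\ell$ yields the matrix identity $M=A\,C\,A^T$, where $A=[\fc{t_i}{f_\ell}]\in\C^{m\times d}$ has as its columns the $\SetT$-truncations of a Hecke eigenform basis $\{f_1,\ldots,f_d\}$ of $\MFs\Gamn$, and $C=\operatorname{diag}(c_1,\ldots,c_d)$. B\"ocherer's nonvanishing theorem, quoted above, ensures that $C$ is invertible.

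Next, the hypothesis that $\SetT$ is a determining set for $\MFs\Gamn$ is precisely the statement that the truncation map $f\mapsto(\fc{t_i}{f})_i$ is injective on $\MFs\Gamn$, so the columns of $A$ are linearly independent and $\operatorname{rank}(A)=d=\dim\MFs\Gamn$. Invertibility of $C$ then gives $\operatorname{rank}(M)=d$, the first assertion. Since every column of $M=A(CA^T)$ lies in the column space of $A$ and the two column spaces have the same dimension, they coincide, so column-reducing $M$ to $[\,*\mid 0\,]$ exhibits a basis of $\operatorname{col}(A)$, namely the $\SetT$-truncations of a basis of $\MFs\Gamn$.

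For the cusp form refinement I would use that $f\in\CFs\Gamn$ iff $\fc{t}{f}=0$ for every singular $t$. Truncations of cusp forms therefore lie in the subspace of $\C^m$ that vanishes on the rows indexed by singular $t\in\SetT$. Conversely, an element of $\operatorname{col}(M)$ that vanishes on those rows corresponds to a form $f$ whose image $\Phi f\in\MFs{\Gamnmone}$ has enough vanishing Fourier coefficients, via the identification of singular-index coefficients of $f$ with coefficients of $\Phi f$, to force $\Phi f=0$; hence $f\in\CFs\Gamn$. Thus $\operatorname{col}(M)$ meets the zero-singular-rows subspace in a $\dim\CFs\Gamn$-dimensional space. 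Ordering $\SetT$ with singular indices first and reducing column-wise so that pivots in the singular rows are chosen before pivots in the definite rows then produces the displayed block form, with the middle block of columns giving a basis of the $\SetT$-truncations of $\CFs\Gamn$.

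The main obstacle will be making the converse step rigorous: one must verify that the singular portion of $\SetT$ is rich enough to detect noncuspness through the Siegel $\Phi$ map. In practice, any set that determines $\MFs\Gamn$ must separate every noncusp form from the cusp forms, so its singular portion must capture the lower-degree data of $\Phi f$; this compatibility should be checked for the specific determining sets used in the implementation. Once it is in hand, the remaining content of the proposition reduces to routine linear algebra applied to the factorization $M=A C A^T$.
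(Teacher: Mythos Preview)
Your argument is essentially the paper's own: factor $M=VCV^{T}$ via Garrett's formula, combine B\"ocherer's nonvanishing of the $c_\ell$ with the linear independence of the $\SetT$-truncations to obtain $\operatorname{rank}M=\dim\MFs\Gamn$ and $\operatorname{col}(M)=\text{trunc}_{\SetT}(\MFs\Gamn)$, and then read off the cusp part from the block shape of the column reduction.

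On the cusp-form step you are in fact more careful than the paper. The paper dispatches it in one line (``immediate since its rows are indexed by a determining set and its rank is $\dim(\MFs\Gamn)$''), whereas you correctly isolate the needed converse: that an $f\in\MFs\Gamn$ whose Fourier coefficients vanish at every \emph{singular} index in $\SetT$ must lie in $\CFs\Gamn$. This does \emph{not} follow from the bare hypothesis that the full $\SetT$-truncation is injective on $\MFs\Gamn$; one can imagine a non-cusp eigenform vanishing at a short prescribed list of singular indices while $\SetT$ remains determining thanks to its definite indices. So your instinct to verify, for the concrete determining sets actually used, that the singular portion of $\SetT$ suffices to detect the $\Phi$-image is exactly the right move---and it is a point the paper's proof leaves implicit rather than resolves.
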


\begin{proof}
Garrett's formula gives 
$M=\sum_{\ell=1}^dc_\ell[\fc{t_i}{f_\ell}\fc{t_j}{f_\ell}]_{m\times m}$,
where $d=\dim(\MFs\Gamn)$ is the desired dimension.
Each summand matrix is an outer product $v_\ell
v_\ell\trans$ where the column vector~$v_\ell$ encodes a determining
truncation of the Fourier series of~$f_\ell$,
$M=\sum_{\ell=1}^dc_\ell v_\ell v_\ell\trans$
where $v_\ell=[\fc{t_1}{f_\ell}\quad\cdot\quad\fc{t_m}{f_\ell}]\trans$
for $\ell=1,\cdots,d$.
The matrix sum thus has the form
$M=VCV\trans$ where $V=[v_1\quad\cdots\quad v_d]_{m\times d}$
and $C={\rm diag}(c_1,\cdots,c_d)$.
Because $\SetT$ is a determining set, the $d$ columns of~$V$ are linearly
independent.
Because each~$c_\ell$ is nonzero, $C$ is invertible.
Therefore the column space of~$CV\trans$ is~$\C^d$, and consequently
$\text{col\,sp}(VCV\trans)=\text{col\,sp}(V)
={\rm span}(v_1,\cdots,v_d)$, which has dimension~$d$.
This shows that $\dim(\MFs\Gamn)={\rm rank}(M)$.
For any $j\in\{1,\cdots,m\}$, the $j$th column of~$M$ is
$\sum_{\ell=1}^dc_{j,\ell}
[\fc{t_1}{f_\ell}\quad\cdot\quad\fc{t_m}{f_\ell}]\trans$
with each $c_{j,\ell}=c_\ell\,\fc{t_j}{f_\ell}$.
This is $[\fc{t_1}{g_j}\quad\cdots\quad\fc{t_m}{g_j}]\trans$
where $g_j=\sum_{\ell=1}^dc_{j,\ell}f_\ell$.
After column reducing~$M$ to the form
$\left[\begin{array}{c|c|c}
*&0&0\\
\hline
*&*&0
\end{array}\right]
$
described in the proposition, the desired statements about the reduced
matrix are immediate since its rows are indexed by a determining set
and its rank is $\dim(\MFs\Gamn)$.
\end{proof}

We make a few comments about Proposition~\ref{PGdimprop}. 
We have methods available to obtain a finite determining set~$\SetT$ of
indices for $\MFs\Gamn$ as needed by the proposition, see \cite{pooryuen07}. 
These determining sets are not needed when the dimension is known. 
In degree three, 
Tsuyumine gave the generating function for $\dim(\MFs{\Gamthree})$
over all weights~$k$ (\cite{tsuyumine86} page~831, though the factor
$(1-T^{12})^3$ in the denominator at the top of page~832 should
be $(1-T^{12})^2$ instead).
So for~$n=3$ we do not need the full strength of Proposition~\ref{PGdimprop}.
Instead, we may simply grow the rows of~$M$ columnwise until we have a
basis, or even take a subset of the rows of~$M$ and grow that matrix
columnwise until getting a basis.

\section{Computing Euler Factors\label{secHEE}}

This section condenses section~3 of \cite{pry09}. 
The Hecke action on Fourier expansions is explained in \cite{breulmann00}. 
Thus, given a basis of $\MFs{\Gamma_n}$ with sufficiently long Fourier expansions, 
we may compute a basis $\{ f_\ell\}$ of eigenforms and their eigenvalues 
$T(p) f = \lambda_f(T(p))f$ and $T_j(p^2) f = \lambda_f(T_j(p^2)) f$.

Letting square brackets connote Weyl group symmetrization,
introduce elements of the invariant Laurent polynomial algebra $\QxpmW$,
$g=[x_0]=x_0\prod_{i=1}^n(x_i+1)$
and $g_\ell=[x_0^2x_1\cdots x_{n-\ell}]$ for $\ell=0,\cdots,n$.
Altogether $g,g_0,g_1,\cdots,g_{n-1},g_0\inv$ generate $\QxpmW$.
For any prime~$p$, the Hecke algebra generators $T(p)$, $\{T_i(p^2)\}$
and the invariant polynomial algebra generators $g$, $\{g_\ell\}$
are related via the Satake isomorphism $\Omega:\HA_p\lra\QxpmW$ and
linear relations as follows (Hilfssatz~3.14 and Hilfssatz~3.17
in~\cite{freitag83}).

\begin{proposition}[Satake Isomorphism on Generators\label{Kriegmatprop}]
Let $n\ge2$ be an integer, and let $p$ be prime.
Then $\Omega(T(p))=g$.
Also, there exists an upper triangular matrix
$K=K_n(p^2)\in\Mats{n+1}{\Z[1/p]}$, with positive entries on and
above the diagonal, such that {\rm(}applying $\Omega$ componentwise on
the left side of the next equation\/{\rm)}
$$
\Omega\left[T_n(p^2)\quad\cdots\quad T_1(p^2)\quad T_0(p^2)\right]
=\left[g_0\quad g_1\quad\cdots\quad g_n\right]K_n(p^2).
$$
\end{proposition}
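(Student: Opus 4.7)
The plan is to compute both sides of each claim directly by decomposing the relevant double cosets into right cosets with Borel representatives $b = \smallmat{p^{e_0} d^*}{*}{0}{d}$ and applying the Satake formula stated earlier in the excerpt. The Weyl invariance of each output is automatic from general Satake theory, so the work lies entirely in the Borel coefficient computation.

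For $\Omega(T(p)) = g$, the similitude multiplier of ${\rm diag}(1_n, p\,1_n)$ is $p$, so each Borel representative has $e_0 = 1$, and the elementary-divisor constraint then forces $(e_1, \ldots, e_n) \in \{0, 1\}^n$. For each subset $S \subseteq \{1, \ldots, n\}$ specifying where $e_i = 1$, the right cosets are indexed by representatives of the $*$-block modulo a lattice depending on $S$; the count of such representatives supplies precisely the $p^i$ factors that cancel the denominators in $(x_i/p^i)^{e_i}$. Summing the contributions over $S$ then collapses to $x_0 \prod_{i=1}^n (1 + x_i) = [x_0] = g$.

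For the matrix $K_n(p^2)$, every Borel representative of every $T_i(p^2)$ has $e_0 = 2$, since the similitude multiplier is $p^2$. Hence each $\Omega(T_i(p^2))$ lies in the $x_0^2$-homogeneous piece of $\QxpmW$, which has $\Q$-basis $\{g_0, g_1, \ldots, g_n\}$ by the description of $\QxpmW$ as a polynomial algebra generated by $g, g_0, \ldots, g_{n-1}$ together with $g_0^{-1}$. Expanding in this basis produces the matrix $K_n(p^2) \in \Mats{n+1}{\Z[1/p]}$. To establish upper triangularity and positivity with the ordering given in the statement, I would identify the leading Weyl orbit in each $\Omega(T_{n-\ell}(p^2))$ under the dominance order on the exponent vector $(e_1, \ldots, e_n)$: the monomial $x_0^2 x_1 \cdots x_{n-\ell}$ appears with a strictly positive coefficient coming from an explicit count of $*$-block representatives, while orbits strictly smaller in dominance contribute only to $g_{\ell'}$ with $\ell' > \ell$.

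The main obstacle is the bookkeeping in this second part --- classifying the Borel representatives of each $T_i(p^2)$ by elementary-divisor pattern and tracking the exact $p$-power multiplicity of the $*$-block for each pattern, precisely enough to confirm both the triangular shape and the strict positivity of every on-or-above-diagonal entry. These enumerations are carried out in Hilfss{\"a}tze 3.14 and 3.17 of \cite{freitag83}, whose explicit double-coset decompositions of $T(p)$ and $T_i(p^2)$ one would quote to conclude, with the positivity of $K_n(p^2)$ emerging as a by-product of the fact that each listed multiplicity is a positive power of $p$.
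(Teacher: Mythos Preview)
Your proposal is correct and aligns with the paper's treatment: the paper does not give its own proof of this proposition but simply cites Hilfssatz~3.14 and Hilfssatz~3.17 of \cite{freitag83}, exactly the reference you invoke at the end. Your sketch of the Borel-representative enumeration is more explicit than anything the paper offers, but the substantive justification in both cases is the citation to Freitag.
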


A.~Krieg \cite{krieg86} gave the entries of the matrix for $n\ge2$,
and a program that computes the matrix is at the authors's website
\cite{yuen15}. In particular,
$$
K_3(p^2)=\left[\begin{matrix}
\dfrac1{p^6} & \dfrac{p^3-1}{p^6} & \dfrac{3p^3-p^2-p-1}{p^4}
& \dfrac{(p-1)(3p^3-p^2-p-1)}{p^4} \\
0 & \dfrac1{p^3} & \dfrac{p^2-1}{p^3} & \dfrac{2(p-1)}{p} \\
0 & 0 & \dfrac1{p} & \dfrac{p-1}{p} \\ 
0 & 0 & 0 & 1
\end{matrix}\right].
$$

Introduce further Weyl-invariant polynomials $r_0,r_1,\cdots,r_{2n}$,
defined by the relation
$\prod_{i=1}^n(1-x_iX)(1-x_i\inv X)=\sum_{\ell=0}^{2n}(-1)^\ell r_\ell X^\ell$,
with $r_{2n-\ell}=r_\ell$.
Thus the standard $p$-Euler factor
$\Qpst fX=(1-X)\prod_{i=1}^n(1-\alpha_{i,p}X)(1-\alpha_{i,p}^{-1}X)$
of a Hecke eigenform $f\in\MFs\Gamn$ is
$\Qpst fX=(1-X)\sum_{\ell=0}^{2n}(-1)^\ell r_\ell(\alpha)X^\ell$.
The $r_\ell$ and $g_\ell$ Laurent polynomials are related by the condition
$$
[r_0\quad r_1\quad\cdots\quad r_n]
=p^{\langle n\rangle-kn}[g_0\quad g_1\quad\cdots\quad g_n]\,P,
$$
where $P=P_n$ is the Pascal-like upper triangular matrix
whose nonzero entries are $\binom{n-i}{(j-i)/2}$ in the $(i,j)$th
position if $j-i\in2\Znn$, the row and column indices starting at~$0$.
In particular,
$$
P_3=\left[\begin{matrix}
1 & 0 & 3 & 0 \\ 0 & 1 & 0 & 2 \\ 0 & 0 & 1 & 0 \\ 0 & 0 & 0 & 1
\end{matrix}\right].
$$
Given a Hecke eigenform, if we can compute its eigenvalues under
$T(p)$ and under $T_i(p^2)$ for $i=0,\cdots,n$ then we can produce the
values $r_i(\alpha)$ that specify its standard $p$-Euler factor, as
follows \cite{pry09}.

\begin{theorem}[Standard Euler Factor From Eigenvalues\label{stdeuthm}]
Let $n\ge2$ and~$k$ be positive integers, and let $p$ be prime. 
Let $f\in\MFs\Gamn$ be a Hecke eigenform of $\HA_p$.
Let $\alpha$ be the Satake parameter of the eigenvalue function~$\lambda_f$.
Introduce the vector of eigenvalues and the vector of polynomial coefficients,
$\vec\lambda=[\lambda_f(T_n(p^2))\quad\cdots\quad\lambda_f(T_0(p^2))]$
and $\vec r=[r_0(\alpha)\quad\cdots\quad r_n(\alpha)]$.
Then $\vec r=p^{\langle n\rangle-kn}\vec\lambda\,K\inv P$,
where the matrices $P$ and~$K$ are as above.
\end{theorem}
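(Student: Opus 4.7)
The plan is to combine the two matrix identities already on the table, evaluated at the Satake parameter $\alpha$. From the background we have the defining property of the Satake parameter, namely $\lambda_f(T) = \Omega(T)(\alpha)$ for every $T \in \HA_p$. Applying this componentwise to the identity of Proposition~\ref{Kriegmatprop} yields
\begin{equation*}
\vec\lambda
 = [\lambda_f(T_n(p^2))\ \cdots\ \lambda_f(T_0(p^2))]
 = [g_0(\alpha)\ \cdots\ g_n(\alpha)]\,K,
\end{equation*}
so that the vector $\vec g(\alpha):=[g_0(\alpha)\ \cdots\ g_n(\alpha)]$ is obtained from the Hecke eigenvalues simply as $\vec g(\alpha) = \vec\lambda\,K^{-1}$, once one knows that $K = K_n(p^2)$ is invertible.

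The second ingredient is the Pascal-style identity
\begin{equation*}
[r_0\ r_1\ \cdots\ r_n]
 = p^{\langle n\rangle - kn}\,[g_0\ g_1\ \cdots\ g_n]\,P
\end{equation*}
in $\QxpmW$, which is an identity of Laurent polynomials and therefore remains true after specialization to~$\alpha$. Substituting $\vec g(\alpha) = \vec\lambda\,K^{-1}$ into this specialization gives
\begin{equation*}
\vec r = \vec g(\alpha)\,p^{\langle n\rangle - kn}\,P
       = p^{\langle n\rangle - kn}\,\vec\lambda\,K^{-1}\,P,
\end{equation*}
which is the claimed formula.

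The only step that requires a remark is the invertibility of~$K$, and this is essentially free: Proposition~\ref{Kriegmatprop} asserts that $K_n(p^2)$ is upper triangular with positive entries on the diagonal, so $\det K$ is a nonzero element of $\Z[1/p]$ and $K^{-1}$ exists over that ring. Hence there is no genuine obstacle in the argument; the content of the theorem is the packaging of Proposition~\ref{Kriegmatprop} and the $r_\ell$--$g_\ell$ relation into a single explicit recipe that converts the computable eigenvalues $\lambda_f(T_i(p^2))$ directly into the coefficients of $\Qpst fX$.
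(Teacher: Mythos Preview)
Your proof is correct and follows the paper's own two-line argument exactly: obtain $\vec g(\alpha)=\vec\lambda\,K^{-1}$ from the Satake property together with Proposition~\ref{Kriegmatprop}, then substitute into $\vec r=p^{\langle n\rangle-kn}\vec g\,P$. One small caveat worth noting: the relation $[r_0\ \cdots\ r_n]=p^{\langle n\rangle-kn}[g_0\ \cdots\ g_n]\,P$ is not literally an identity in $\QxpmW$, since the $r_\ell$ do not involve $x_0$ while the $g_\ell$ do; it holds at the Satake parameter of a weight-$k$ eigenform because of the normalization $g_0(\alpha)=\alpha_0^2\alpha_1\cdots\alpha_n=p^{kn-\langle n\rangle}$, so your conclusion is unaffected.
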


Indeed, introducing $\vec g=[g_0(\alpha)\quad\cdots\quad g_n(\alpha)]$
we have $\vec r=p^{\langle n\rangle-kn}\vec gP$,
and the Satake mapping property and Proposition~\ref{Kriegmatprop}
combine to give $\vec g=\vec\lambda\,K\inv$.

Computing the spinor Euler factor from the eigenvalues is similar.
In degree~$3$, the spinor $p$-Euler factor is
\begin{align*}
\Qpsp fX
&=(1-\alpha_0X)
\cdot(1-\alpha_0\alpha_1X)(1-\alpha_0\alpha_2X)(1-\alpha_0\alpha_3X)\\
&\quad\cdot(1-\alpha_0\alpha_1\alpha_2X)(1-\alpha_0\alpha_1\alpha_3X)
(1-\alpha_0\alpha_2\alpha_3X)
(1-\alpha_0\alpha_1\alpha_2\alpha_3X).
\end{align*}
Denote its expansion $\sum_{\ell=0}^8(-1)^\ell s_\ell X^\ell$.
By direct computation, or \cite{miyawaki92}, page~{310}, 
\begin{alignat*}2
s_0&=1&\qquad s_8&=g_0^4(\alpha)s_0\\
s_1&=g(\alpha)&\qquad s_7&=g_0^3(\alpha)s_1\\
s_2&=(4g_0+2g_1+g_2)(\alpha)&\qquad s_6&=g_0^2(\alpha)s_2\\
s_3&=g(\alpha)(g_0+g_1)(\alpha)&\qquad s_5&=g_0(\alpha)s_3\\
s_4&=(2g_0^2+4g_0g_1+g_0g_3+g_1^2)(\alpha).
\end{alignat*}
By the Satake mapping property and Proposition~\ref{Kriegmatprop},
$g(\alpha)=\lambda_f(T(p))$ and (as above) $\vec g=\vec\lambda\,K\inv$.
The spinor factor follows from the previous display.

\section{Implementation\label{secImp}}

Implementing the pullback--genus method is not a purely mechanical matter.

For a given weight~$k$, one wants to choose a small determining set
of indices~$\mathcal T$ that aptly comprises matrices of rank $1$, $2$, and~$3$ in
light of the known dimensions of $\MFs\Gamone$, $\MFs\Gamtwo$, and
$\MFs\Gamthree$. These matrices should have small entries. But also
the space that we are trying to determine can have an element that
vanishes to high order, such as Igusa's 
$\chi_{18}\in \CFswtgp{18}\Gamthree$, requiring a bigger index to ``see'' it in
order for the method to succeed. Guessing a small determining set for a
given degree~$k$ requires a combination of software experimentation
and mathematical insight into the structure of $\MFs\Gamthree$.

The index-enumeration algorithm of section~\ref{PG:IE} is a significant bottleneck, quickly
growing expensive as the entries of the elements of the determining
set grow. To carry out our computations through weight~$22$ we
processed $1965$ Fourier coefficient indices $t_1\times t_2$
of the pullback $\iotast\Esixk$, leading to $1\,561\,537\,201$ Fourier
coefficient indices $\smallmat{t_1}r{r\trans}{t_2}$ of $\Esixk$ itself.
This multitude of indices gave rise to only $54\,314$ genus
symbols, showing the crucial role of genus coarseness in the
pullback--genus method. Indeed, the indices probably lie in
considerably fewer genera, because we allow genus symbols that are not unique
at~$2$ and the entries of our indices often are divisible by~$2$.
The genera are recorded with multiplicity and the Fourier coefficient
for each genus is computed only once.

Not only do large collections of indices $t$ give rise to far
fewer genus symbols than equivalence classes, but furthermore the
genus symbols are much faster and more space-efficient to compute
because equivalence class computations require a sophisticated
algorithm that uses lattice reduction and maintains an elaborate,
memory-expensive tree structure.
For example, the pair
$$ 
t_1\times t_2
=\left[\begin{matrix}1&\hlf&\hlf\\\hlf&2&1\\\hlf&1&2\end{matrix}\right]
\times\left[\begin{matrix}4&2&2\\2&4&2\\2&2&4\end{matrix}\right]
$$ 
arose in weight~$22$ and gave rise to $6\,755\,849$ semidefinite 
indices~$t=\smallmat{t_1}r{r\trans}{t_2}$.
Some 36 hours of computation on a typical server determined that these
indices fell into $9132$ equivalence classes. On the other hand, only five
minutes of laptop computation produced $4238$ distinct genus symbols
from the indices, and, as in the previous paragraph, the actual number
of genera is smaller.
Another pair $t_1\times t_2$ that we tested separately from our main
computation took about $50$ hours of computation on the server, using
over $1.3$ gigabytes of space, to determine that the resulting
$4\,002\,643$ indices~$t$ fell into $33\,440$ lattice classes, whereas
the laptop computation to produce $9114$ genus symbols from the
indices took only several minutes and under $100$ megabytes of space.

Separately from computing the pulled back Eisenstein series Fourier
coefficients, which are rational, as the weight~$k$ grows so do the
number fields underlying the Hecke eigenforms on the right side
$\sum_\ell c_\ell f_\ell\otimes f_\ell$ of Garrett's formula, and this
posed various programming challenges. The right side summand does not
determine $c_\ell$ or~$f_\ell$ individually, and considerable care was
required to scale them in a way that allowed congruence primes to
be diagnosed. 

%
%



\end{document}